\def\@cite#1#2{{\m@th\upshape\bfseries%
[{#1\if@tempswa{\m@th\upshape\mdseries, #2}\fi}]}}
\theoremstyle{plain}
\newtheorem{thm}{Theorem}[section]
\newtheorem{cor}[thm]{Corollary}
\newtheorem{lem}[thm]{Lemma}
\theoremstyle{definition}
\newtheorem*{acknow}{Acknowledgements}
\numberwithin{equation}{section}
\newcommand{\bC}{{\mathbb{C}}}
\newcommand{\B}{{\mathcal{B}}}
\newcommand{\ep}{\varepsilon}
\renewcommand{\phi}{\varphi}
\def\si{\sigma}
\newcommand{\FOR}{\text{ for }}
\newcommand{\foral}{\text{ for all }}
\newcommand{\qand}{\quad\text{and}\quad}
\newcommand{\ol}{\overline}
\newcommand{\sumoplus}{\operatornamewithlimits{\sum \strut^\oplus}}
\newcommand{\fp}{\check{\bigast}}
\begin{document}

\title{A proof of Boca's Theorem}

\author[K.R. Davidson]{Kenneth R. Davidson}
\address{Pure Mathematics Department\\
University of Waterloo\\
Waterloo, ON\; N2L--3G1\\
CANADA}
\email{krdavids@uwaterloo.ca}

\author[E.T.A. Kakariadis]{Evgenios T.A. Kakariadis}
\address{School of Mathematics and Statistics\\
Newcastle University\\
Newcastle upon Tyne, NE1 7RU\\
UK}
\email{evgenios.kakariadis@ncl.ac.uk}

\begin{abstract}
We give a general method of extending unital completely positive maps to amalgamated free products of C*-algebras.
As an application we give a dilation theoretic proof of Boca's Theorem.
\end{abstract}

\subjclass[2010]{Primary  46L07, 46L09}

\keywords{completely positive map, free product C*-algebra}

\maketitle

\section{Introduction}

The amalgamated free product of C*-algebras has become a standard construction in the subject.
It is central in studying group C*-algebras, free probability, quantum information, dynamical systems on C*-algebras, and the Connes Embedding Conjecture  via its connection to Kirchberg's conjecture and also to Tsirelson's conjectures. 
Boca's result on completely positive maps on amalgamated free products has become a useful technical tool in this endeavour.
A comprehensive survey would be too time consuming for this short note.

Avitzour \cite{Avi82} showed that states on two C*-algebras can be extended to a state on the free product of the two C*-algebras.
Boca's Theorem \cite{Boc91} shows that unital completely positive maps on two C*-algebras agreeing on a common C*-subalgebra can be extended to a unital completely positive map on their amalgamated free product.
However his method requires some additional structure, and yields additional structure.
Here we give a simple proof of the basic result that does not require these added hypotheses,
and then we explain how the argument can be modified to yield Boca's actual theorem.

To fix notation, let $\{A_i\}_{i \in I}$ be a family of unital C*-algebras with a common unital C*-subalgebra $B$, i.e., there are unital imbeddings $\ep_i: B \to A_i$. 
The amalgamated free product $\fp_B A_i$ is an appropriate completion of the $*$-algebraic amalgamated free product $\bigast_B A_i$.
It is the universal C*-algebra generated by $\bigcup_i \phi_i(A_i)$, for imbeddings $\{\phi_i\}_{i \in I}$ with $\phi_i \ep_i = \phi_j \ep_j$, such that: every family of $*$-rep\-re\-sent\-ations 
\[ \{ \psi_i : A_i \to \B(H) \mid \psi_i \ep_i = \psi_j \ep_j \FOR i,j \in I \} \]
lifts to a $*$-rep\-re\-sent\-ation $\pi : \fp_B A_i \to \B(H)$ such that $\psi_i=\pi\phi_i$.

If we assume the existence of conditional expectations $E_i : A_i \to B$ for $i\in I$, 
then the $*$-algebraic amalgamated free product, as a linear space, takes the form
\begin{equation}\label{eq:form}
\bigast \hspace{-3pt} \strut_B A_i = B \oplus \sumoplus_{i_1 \cdots i_n \in S} ( \ker E_{i_1} \otimes_B \cdots \otimes_B \ker E_{i_n})
\end{equation}
where $\otimes_B$ denotes the bimodule tensor product, and
\begin{equation}\label{eq:set}
S := \{ i_1 \cdots i_n : n\ge1,\  i_1, \dots, i_n \in I, i_1 \neq \cdots \neq i_n \}
\end{equation}
is the set of non-empty words in the alphabet $I$ that contain no subwords of the form $ii$.
See \cite{Avi82} or \cite[Proof of Theorem 3.1]{Coh64}.
It is natural to use (\ref{eq:form}) for extending linear maps \emph{canonically}.
Indeed let $\{ \Phi_i : A_i \to \B(H) \}_{i \in I}$ be unital completely positive maps which restrict to a common $*$-rep\-re\-sent\-ation $\rho$ of $B$.
Then one can directly define a linear map $\Phi$ on $\bigast_B A_i$ given by
\begin{equation}\label{eq:ce}
\Phi(b) =  \rho(b)  \qand  \Phi( a_{i_1} \ast \cdots \ast a_{i_n}) := \Phi_{i_1}(a_{i_1}) \cdots \Phi_{i_n}(a_{i_n})
\end{equation}
for $b \in B$ and $a_{i_k} \in \ker E_{i_k}$ when $i_1\cdots i_n \in S$.
Boca shows that $\Phi$ extends to a unital completely positive map of $\fp_B A_i$.
Applying this to $B= \bC$ yields the fact that unital completely positive maps of $A_i$ into a common Hilbert space extend to a unital completely positive map of $\fp A_i$.

To achieve his results, Boca verifies matrix inequalities for elements in $\bigast_B A_i$.
However this line of reasoning does not apply in the absence of expectations.
Here we present a method that tackles this problem by exploiting the ideas of \cite{DFK17}.
This produces an explicit Stinespring dilation of the desired completely positive map.
Boca \cite{Boc93} produces an explicit Stinespring dilation by different methods when expectations are available.
With additional care, our arguments also provide an alternative proof of Boca's result.

Our strategy is to dilate the maps $\{ \Phi_i \}_{i \in I}$ to $*$-rep\-re\-sent\-ations $\{ \pi_i: A_i \to\B(K) \}_{i \in I}$ on a common Hilbert space $K$ that agree on $B$.
The universal property will then provide a $*$-rep\-re\-sent\-ation $\pi$ of $\fp_B A_i$.
The compression of $\pi$ to $H$ yields the desired completely positive map $\Phi$ (Theorem \ref{T:boca}).
We further use this to construct a unital completely positive extension in the case where the $\Phi_i$ agree just as linear maps on $B$. 
Under the additional structure of \cite{Boc91}, we can construct $\Phi$ so that (\ref{eq:ce}) holds, and thus it coincides with Boca's map.
Actually we do more here.
Given a sub-family of expectations $\{E_j : A_j \to B\}_{j \in J}$ for $J \subset I$, we can construct $\Phi$ that satisfies (\ref{eq:ce}) for $i_1, \dots, i_n \in J$ (Theorem \ref{T:boca2}).

\section{Preliminaries}

Let $I$ be an index set of arbitrary cardinality. 
Fix a family $\{A_i\}_{i \in I}$ of unital C*-algebras that contain a common unital C*-subalgebra $B$ in the sense that there are faithful unital imbeddings $\ep_i : B \to A_i$ for $i\in I$.
We denote by $\bigast_B A_i$ the \emph{$*$-algebraic amalgamated free product} with canonical imbeddings $\phi_i : A_i \to \bigast_B A_i$.
The \emph{amalgamated free product} $\fp_B A_i$ of C*-algebras $\{A_i\}_{i\in I}$ is then its quotient completion with respect to the seminorm
\[
\|x\| := \sup \{ \|\pi(x)\| : \text{ $\pi$ is a $*$-rep\-re\-sent\-ation of $\bigast_B A_i$} \}.
\]
The supremum is finite since the $\pi \phi_i$ are $*$-rep\-re\-sent\-ations of $A_i$.
The existence of $\fp_B A_i$ is then routine; e.g.\ \cite[p. 88]{DFK17}.
Blackadar \cite[Theorem 3.1]{Bla78} shows that such a $\pi$ can be constructed so that the $\pi \phi_i$ are isometric.
Therefore the canonical imbeddings $A_i \to \fp_B A_i$ for $i\in I$ are isometric; and henceforth we will suppress their use.
Notice that a $*$-rep\-re\-sent\-ation $\pi$ of $\fp_B A_i$ satisfies $\pi \ep_i = \pi\ep_j$ for all $i, j \in I$.

Pedersen \cite{Ped99} shows that if $B \subset C_1 \subset A_1$ and $B \subset C_2 \subset A_2$ are unital inclusions of C*-algebras, then the natural map of $C_1 \check\bigast C_2$ into $A_1 \check\bigast A_2$ given by the universal property is injective.
An alternative proof was given later by Armstrong, Dykema, Exel and Li \cite{ADEL04}.
This result can be extended to free products of finitely many C*-algebras.

With Fuller, the authors gave a direct extension of these arguments in \cite[Lemma 5.3.18]{DFK17}.
Even though \cite[Lemma 5.3.18]{DFK17} treats the finite case, the reasoning can be applied verbatim to tackle arbitrary families.
We sketch the proof because we wish to establish some notation.

The following elementary fact will be used repeatedly. 
Suppose that $\Phi$ is a unital completely positive map of a C*-algebra $A$ into $\B(H)$, and $B$ is a C*-subalgebra of $A$ such that $\Phi|_B$ is a $*$-representation.
Then given any Stinespring dilation $\pi: A \to \B(H \oplus H')$ of $\Phi$, the space $H$ reduces $\pi(B)$.
This follows because the compression of a $*$-representation to a subspace is multiplicative only when the subspace is invariant.

\begin{lem}\label{L:DFK}
Let $B$ be a unital C*-algebra.
Let $\{A_i\}_{i \in I}$ be a family of unital C*-algebras with faithful unital imbeddings $\ep_i: B \to A_i$, 
and suppose that there are $*$-rep\-re\-sent\-ations $\rho_i : B \to \B(H_i)$ for $i\in I$.
Then there are a Hilbert space $K$ containing $\sum_{i \in I}^\oplus H_i$ and $*$-rep\-re\-sent\-ations $\pi_i : A_i \to \B(K \ominus H_i)$ such that the $\pi_i$ agree on $B$ in the sense that 
\[
\rho_i \oplus \pi_i \ep_i = \rho_j \oplus \pi_j \ep_j \foral i, j \in I.
\]
\end{lem}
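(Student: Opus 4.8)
The plan is to produce the maps $\pi_i$ as corners of a single representation of $B$. Precisely, I would construct a representation $\sigma\colon B\to\B(K)$ on a Hilbert space $K\supseteq\bigoplus_{i\in I}H_i$ such that each summand $H_i$ reduces $\sigma(B)$ with $\sigma|_{H_i}=\rho_i$, and such that for every $i$ the compression $\sigma|_{K\ominus H_i}$ equals $\pi_i\ep_i$ for some $*$-representation $\pi_i$ of $A_i$ acting on $K\ominus H_i$. Granting this, the identification $K=H_i\oplus(K\ominus H_i)$ gives $\sigma=\rho_i\oplus\pi_i\ep_i$ for every $i$, which is exactly the required agreement on $B$.

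The basic mechanism is that every representation $\tau$ of $B$ embeds into an $A_i$-extendable representation of $B$. Extend the unital completely positive map $\tau\colon B\to\B(H_\tau)$ by Arveson's theorem to a unital completely positive map of $A_i$, and take a minimal Stinespring dilation $\widehat{\tau}_i\colon A_i\to\B(H_\tau\oplus H_\tau')$. Its restriction to $B$ is the $*$-representation $\tau$, so by the elementary fact recalled above $H_\tau$ reduces $\widehat{\tau}_i(B)$ and $\widehat{\tau}_i\ep_i=\tau\oplus\tau'$ for some representation $\tau'$ of $B$. Thus $\widehat{\tau}_i\ep_i$ is a representation of $B$ that extends to $A_i$ and contains $\tau$ on a reducing subspace.

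Next I would fix an infinite cardinal $\kappa$ dominating $|I|$, $\dim\bigl(\bigoplus_{i}H_i\bigr)$, every $\dim A_i$, and $\dim H_u$, where $\pi_u\colon B\to\B(H_u)$ is the universal representation of $B$; the last two conditions only serve to keep the Stinespring spaces appearing below of size at most $\kappa$. Writing $\pi_u^{(\kappa)}=\bigoplus_{\varphi}\pi_\varphi^{(\kappa)}$ over the states $\varphi$ of $B$, a direct cardinal count shows that $\pi_u^{(\kappa)}$ \emph{absorbs} every representation $\eta$ of $B$ with $\dim H_\eta\le\kappa$, in the sense that $\pi_u^{(\kappa)}\oplus\eta\cong\pi_u^{(\kappa)}$: decompose $\eta$ into at most $\kappa$ cyclic summands, each of which is a GNS representation $\pi_\varphi$ and hence occurs with multiplicity at least $\kappa$ in $\pi_u^{(\kappa)}$, and use $\kappa+\kappa=\kappa$. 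Applying the preceding paragraph with $\tau=\pi_u^{(\kappa)}$, for each $i$ I obtain a representation $\widetilde{\pi}_i\colon A_i\to\B(M_i)$ with $\dim M_i\le\kappa$ whose restriction $\omega_i:=\widetilde{\pi}_i\ep_i$ contains $\pi_u^{(\kappa)}$ and hence has the same absorption property.

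Finally I would set $K:=\bigl(\bigoplus_{j}H_j\bigr)\oplus\bigl(\bigoplus_{k}M_k\bigr)$ and $\sigma:=\bigl(\bigoplus_{j}\rho_j\bigr)\oplus\bigl(\bigoplus_{k}\omega_k\bigr)$. Clearly $K\supseteq\bigoplus_jH_j$ and $\sigma|_{H_j}=\rho_j$, while for each $i$ one has $\sigma|_{K\ominus H_i}=\omega_i\oplus\Xi_i$ with $\Xi_i:=\bigl(\bigoplus_{j\neq i}\rho_j\bigr)\oplus\bigl(\bigoplus_{k\neq i}\omega_k\bigr)$ of dimension at most $\kappa$ by the choice of $\kappa$. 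The absorption property yields a unitary $V_i\colon K\ominus H_i\to M_i$ intertwining $\sigma|_{K\ominus H_i}$ with $\omega_i=\widetilde{\pi}_i\ep_i$, so $\pi_i:=V_i^{*}\widetilde{\pi}_i(\,\cdot\,)V_i$ is a $*$-representation of $A_i$ on $K\ominus H_i$ with $\pi_i\ep_i=\sigma|_{K\ominus H_i}$, and therefore $\rho_i\oplus\pi_i\ep_i=\sigma$ for every $i$. The step I expect to require the most care is the cardinal bookkeeping in the third paragraph: one must be able to pick a \emph{single} $\kappa$ for which the absorbing representations $\omega_i$ are themselves small enough to be absorbed by one another, which is why I build them from a fixed ampliation of the universal representation of $B$ rather than from a direct sum over all representations of $B$ of dimension $\le\kappa$ (a class whose cardinality may well exceed $\kappa$). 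Everything else is routine.
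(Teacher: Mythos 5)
Your proof is correct, but it takes a genuinely different route from the paper. The paper builds $K=\sum^\oplus_{w\in S}H_w$ recursively over the set $S$ of reduced words in $I$: at each stage it applies Arveson extension and Stinespring dilation to $\rho_w$ to get $\pi_{i,w}$ on $H_w\oplus H_{wi}$, and then observes that $\rho_i\oplus\pi_i\ep_i=\sum^\oplus_{u\in S}\rho_u$ is visibly independent of $i$. You instead do a single round of Arveson--Stinespring applied to a highly absorbing representation $\pi_u^{(\kappa)}$ of $B$, and then use the standard multiplicity/absorption argument ($\pi_u^{(\kappa)}\oplus\eta\cong\pi_u^{(\kappa)}$ for $\dim H_\eta\le\kappa$, via cyclic decomposition into GNS representations and $\kappa+\kappa=\kappa$) to identify $\sigma|_{K\ominus H_i}$ with the $A_i$-extendable representation $\omega_i$; your cardinal bookkeeping is sound, and your choice to ampliate the universal representation rather than sum over all small representations correctly sidesteps the set-theoretic trap you mention. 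What each approach buys: yours is shorter for this lemma in isolation and avoids transfinite recursion over words, at the cost of invoking decomposition theory for representations. The paper's word-indexed construction is what makes the rest of the paper work: the graded structure $K=\sum^\oplus_{w\in S}H_w$ is exactly what is refined in Lemma~\ref{L:ker} to get $\pi_j(\ker E_j)H_w\subset H_{wj}$, which drives the multiplicativity identity \eqref{eq:prod} in Theorem~\ref{T:boca2} (Boca's actual theorem). Your $K$ carries no such grading, so while it proves Theorems~\ref{T:boca} and~\ref{T:boca cp}, it would not obviously adapt to Theorem~\ref{T:boca2} without reintroducing something like the word decomposition.
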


\begin{proof}
Recall that $S$ is the set of non-empty words in $I$ that contain no subwords of the form $ii$. 
For a word $i_1 \cdots i_n$ in $I$, we set 
\begin{equation*}
s( i_1 \cdots i_n) := i_n \qand | i_1 \cdots i_n| := n.
\end{equation*}
We will recursively define Hilbert spaces $H_w$ for $w\in S$ with $|w|\ge 2$,  
$*$-representations $\rho_w:B \to \B(H_w)$, and for $u\in S$ and $i \ne s(u)$, 
we will construct a $*$-representation $\pi_{i,u}:A_i \to \B(H_u \oplus H_{ui})$ so that $\pi_{i,u}\ep_i = \rho_u \oplus \rho_{ui}$. 

By hypothesis, the $\rho_j$ are defined for $j\in I$. Consider them as unital completely positive maps of $B$ into $\B(H_j)$.
For each $i\ne j$, use Arveson's extension theorem to extend $\rho_j$ to a completely positive map of $A_i$ into $\B(H_j)$, 
and then apply Stinespring's dilation theorem\footnote{
Alternatively we may use the more elementary result \cite[Proposition 2.10.2]{Dix77}. For reasons that will become clear later, we do not follow this route.}
to obtain a $*$-rep\-re\-sent\-ation $\pi_{i,j}:A_i\to\B(H_j \oplus H_{ji})$.
By the remark preceding the lemma, $\pi_{i,j}\ep_i(B)$ reduces $H_j$ (and hence also $H_{ji}$). 
So we may define a $*$-rep\-re\-sent\-ation $\rho_{ji}:= P_{H_{ji}}\pi_{i,j}\ep_i$. We then have $\pi_{i,j}\ep_i = \rho_j \oplus \rho_{ji}$.

Now consider $k\ge2$.
Suppose that the $H_w$ are defined for $w \in S$ with $|w| \le k$, 
that $\rho_w:B \to \B(H_w)$ are $*$-representations, and that there are $*$-rep\-re\-sent\-ations 
$\pi_{i,u}$ of $A_i$ on $H_u \oplus H_{ui}$ whenever $u \in S$ with $|u|<k$ and $s(u) \ne i$ such that 
$\pi_{i,u}\ep_i = \rho_u \oplus \rho_{ui}$. 
If $|w|=k$ and $i\ne s(w)$, use Arveson's extension theorem to extend $\rho_w$ to a completely positive map of $A_i$ into $\B(H_w)$.
Then apply Stinespring's dilation theorem to obtain a $*$-rep\-re\-sent\-ation $\pi_{i,w}:A_i\to\B(H_w \oplus H_{wi})$.
By the remark preceding the lemma, $\pi_{i,w}\ep_i(B)$ reduces $H_w$ (and hence also $H_{wi}$). 
So we may define a $*$-rep\-re\-sent\-ation $\rho_{wi}:= P_{H_{wi}}\pi_{i,w}\ep_i$. We then have $\pi_{i,w}\ep_i = \rho_w \oplus \rho_{wi}$.
This completes the induction.

Set $K = \sum_{w\in S}^\oplus H_w$. 
Define $\pi_i = \sum^\oplus_{s(w)\ne i} \pi_{i,w}$, which is a $*$- rep\-re\-sent\-ation of $A_i$ on
\[ \sumoplus_{s(w)\ne i} H_w \oplus H_{wi} = \sumoplus_{u \in S,\ u \ne i} H_u = K \ominus H_i .\]
Moreover 
\[ \pi_i\ep_i = \sumoplus_{s(w)\ne i}  \rho_w \oplus \rho_{wi} = \sumoplus_{u \in S,\ u \ne i}  \rho_u .\]
Thus $\rho_i \oplus \pi_i\ep_i = \sum^\oplus_{u \in S} \rho_u$ is independent of $i$.
\end{proof}

As a consequence we get the imbedding of \cite{Ped99, ADEL04, DFK17} for arbitrary families of C*-algebras.

\begin{cor}\label{C:incl}
Let $\{C_i\}_{i \in I}$ and $\{A_i\}_{i \in I}$ be families of unital C*-algebras such that $B \subset C_i \subset A_i$ for a common unital C*-subalgebra $B$.
Then $\fp_B C_i \subset \fp_B A_i$ via the natural inclusion map.

In particular, if $J$ is a non-empty subset of $I$, then $\fp_B^{i \in J} A_i \subset \fp_B A_i$ via the natural inclusion map.
\end{cor}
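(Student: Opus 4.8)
The plan is to deduce Corollary \ref{C:incl} from Lemma \ref{L:DFK} by a faithfulness argument. First I would recall that to show the natural $*$-homomorphism $\iota : \fp_B C_i \to \fp_B A_i$ is injective, it suffices to produce a single faithful $*$-representation $\pi$ of $\fp_B C_i$ that factors through $\iota$; that is, a $*$-representation of $\fp_B A_i$ whose restriction to (the image of) $\fp_B C_i$ is faithful. By Blackadar's theorem (quoted in the preliminaries), the canonical imbeddings $C_i \hookrightarrow \fp_B C_i$ are isometric, so there is a faithful $*$-representation $\sigma$ of $\fp_B C_i$ on some Hilbert space; write $\rho_i := \sigma|_{C_i}$, and note these restrictions agree on $B$, giving in particular $*$-representations $\rho_i|_B : B \to \B(H_i)$ with $H_i$ the representation space.

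Next I would feed the representations $\rho_i|_B$ of $B$ into Lemma \ref{L:DFK} applied to the family $\{A_i\}$: this yields a Hilbert space $K \supseteq \sum_{i\in I}^\oplus H_i$ and $*$-representations $\pi_i : A_i \to \B(K \ominus H_i)$ such that $\rho_i|_B \oplus \pi_i\ep_i$ is independent of $i$. The key point is that the representations $\widetilde{\rho}_i := \rho_i \oplus (\pi_i|_{C_i}) : C_i \to \B(K)$ (using $C_i \subset A_i$) still agree on $B$, since $\widetilde{\rho}_i\ep_i = \rho_i|_B \oplus \pi_i\ep_i$ is independent of $i$ by the Lemma. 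Hence the universal property of $\fp_B A_i$ produces a $*$-representation $\pi : \fp_B A_i \to \B(K)$ with $\pi|_{A_i} = \pi_i$... — more precisely, I should instead extend each $\pi_i : A_i \to \B(K\ominus H_i)$ to $\widehat{\pi}_i := \rho_i|_B \oplus \pi_i$ where on the $H_i$ summand one uses... Actually the cleanest route: the $*$-representations $\pi_i$ of $A_i$ together with the common representation $\sum_{w}^\oplus \rho_w$ of $B$ on $K$ do not quite glue, so I would argue directly on $\fp_B C_i$ instead. Let me restructure: apply Lemma \ref{L:DFK} to get $\{\pi_i\}$, set $\widetilde{\sigma}_i := \sigma \oplus \pi_i|_{C_i} : C_i \to \B(H \oplus (K\ominus H_i))$; these agree on $B$ because $\widetilde{\sigma}_i\ep_i = \rho_i|_B \oplus \pi_i\ep_i$ which is independent of $i$ up to the fixed identification $H = \sum H_i$. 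By the universal property of $\fp_B C_i$ they lift to a $*$-representation $\widetilde{\sigma} : \fp_B C_i \to \B(H\oplus\cdots)$, and compressing to $H$ recovers $\sigma$, which is faithful. Therefore $\widetilde{\sigma}$ is faithful, and since it manifestly factors through the natural map $\fp_B C_i \to \fp_B A_i$ (each $\widetilde{\sigma}_i$ extends to $A_i$), that natural map is injective.

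The second assertion is the special case $C_j = A_j$ for $j \in J$ and $C_j = B$ for $j \notin J$: then $\fp_B C_i = \fp_B^{i\in J} A_i$ (amalgamating copies of $B$ over $B$ contributes nothing new), and the first part gives the inclusion $\fp_B^{i\in J} A_i \subset \fp_B A_i$.

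\medskip

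The main obstacle I expect is bookkeeping the common-on-$B$ condition carefully: Lemma \ref{L:DFK} guarantees $\rho_i \oplus \pi_i\ep_i$ is independent of $i$ only after fixing the identification of each $H_i$ as a summand of the common space $\sum_{w\in S}^\oplus H_w$, so one must be sure that the dilations $\widetilde{\sigma}_i$ of the $C_i$ are built on a single fixed Hilbert space $H \oplus \big(\sum_{w\in S}^\oplus H_w\big)$ with the $H$-part identified with $\sigma$'s space and the remaining summands carrying $\pi_i$ on $K\ominus H_i$ together with $\rho_i$ on $H_i \subset K$ — i.e. one genuinely uses $\rho_i \oplus \pi_i$ as a representation of $C_i$ on all of $H\oplus K$, extending the inclusion $C_i \subset A_i$ along $\pi_i$ and along $\sigma$. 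Once the spaces are aligned, the universal property and the faithfulness of the compression are formal.
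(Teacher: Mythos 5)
Your overall strategy (dilate to representations of the $A_i$ that agree on $B$, invoke the universal property, and recover the faithful representation $\si$ by compression) is the right one and matches the paper's, and your reduction of the second assertion to the first via $C_j=A_j$ for $j\in J$, $C_j=B$ otherwise, is exactly what the paper does. But there is a genuine gap at the heart of the first part: you never extend $\si|_{C_i}$ from $C_i$ to $A_i$. Your building blocks $\widetilde{\si}_i = \si|_{C_i} \oplus \pi_i|_{C_i}$ are representations of $C_i$ only; the summand carrying $\si|_{C_i}$ on $H$ does not extend to a $*$-representation of $A_i$ on that same space (if it did, the corollary would be almost immediate). Consequently the lift $\widetilde{\si}$ you obtain is just another representation of $\fp_B C_i$, and the parenthetical claim that it ``manifestly factors through'' $\fp_B A_i$ is unjustified --- to factor through, \emph{every} direct summand must be a representation of $A_i$. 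Lemma \ref{L:DFK} cannot supply this for you, because the $\pi_i$ it produces act on $K\ominus H_i$, i.e.\ precisely \emph{not} on the copy of $H$ where $\si$ lives, and they know nothing about $\si|_{C_i}$ beyond its restriction to $B$.

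The missing step is the one the paper does first: use Arveson's extension theorem and Stinespring's dilation to extend each $\si_i:=\si|_{C_i}$ to a $*$-representation $\widetilde{\si}_i: A_i\to\B(H\oplus H_i)$ on a strictly larger space. Since $\si_i\ep_i$ is already a $*$-representation on $H$, the space $H$ reduces $\widetilde{\si}_i\ep_i(B)$, so $\widetilde{\si}_i\ep_i=\si_i\ep_i\oplus\rho_i$ with $\rho_i:B\to\B(H_i)$. These extensions now disagree on $B$ only through the new pieces $\rho_i$, and \emph{that} is where Lemma \ref{L:DFK} enters: applied to the $\rho_i$ (not to $\si|_B$), it yields $\pi_i:A_i\to\B(K\ominus H_i)$ with $\rho_i\oplus\pi_i\ep_i$ independent of $i$, so that $\tau_i:=\widetilde{\si}_i\oplus\pi_i$ are genuine representations of the $A_i$ on the single space $H\oplus K$ agreeing on $B$. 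Then $H$ is a common reducing subspace on which each $\tau_i|_{C_i}$ compresses to $\si_i$, and faithfulness follows as you intended. A secondary point: in your version the compression argument also falters because $H$ sits inside $K$ as $|I|$ different summands $H_i$, and only $C_i$ acts as $\si$ on the $i$-th copy, so compressing to any one copy does not recover $\si$ on all of $\fp_B C_i$; keeping $H$ as a summand separate from $K$, as above, removes this ambiguity.
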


\begin{proof}
For the first part, let $\si : \fp_B C_i \to \B(H)$ be a faithful $*$-rep\-re\-sent\-ation.
We can find Hilbert spaces $H_i$ and extend every $\si_i := \si|_{C_i}$ to a $*$-rep\-re\-sent\-ation $\widetilde{\si}_i \colon A_i \to \B(H \oplus H_i)$.
Since $\si_i \ep_i$ is a $*$-rep\-re\-sent\-ation of $B$ on $H$, we can decompose $\widetilde{\si}_i \ep_i = \si_i \ep_i \oplus \rho_i$.
Hence we can apply Lemma \ref{L:DFK} to $\rho_i : B \to \B(H_i)$ and obtain the $*$-rep\-re\-sent\-ations $\pi_i \colon A_i \to \B(K \ominus H_i)$ such that
\[
\rho_i \oplus \pi_i \ep_i = \rho_j \oplus \pi_j \ep_j. 
\]
Define $\tau_i : A_i \to \B(H \oplus K)$ by $\tau_i = \widetilde{\si}_i \oplus \pi_i$.
Since $\si_i \ep_i = \si_{j} \ep_j$, this construction yields
\begin{align*}
\tau_i \ep_i
& =
\si_i \ep_i \oplus \rho_i \oplus \pi_i \ep_i
=
\si_j \ep_j \oplus \rho_j \oplus \pi_j \ep_j
=
\tau_{j} \ep_j.
\end{align*}
The universal property of the amalgamated free product then yields a $*$-rep\-re\-sent\-ation of $\fp_B A_i$ that extends $\si$.
On the other hand every $*$-rep\-re\-sent\-ation of $\bigast_B A_i$ restricts to a $*$-rep\-re\-sent\-ation of $\bigast_B C_i$.
Hence the canonical inclusion map $\bigast_B C_i \hookrightarrow \bigast_B A_i$ extends to an isometry on their completions.

We can now apply this to the family $\{C_i\}_{i \in I}$ given by
\[
C_i
=
\begin{cases}
A_i & \text{ if } i \in J, \\
B & \text{ if } i \notin J .
\end{cases}
\]
Notice that
\[
\bigast \hspace{-3pt} \strut_B C_i = 
(\bigast \hspace{-3pt} \strut_B^{i \in J} A_i) \bigast \hspace{-3pt} \strut_B (\bigast \hspace{-3pt} \strut_B^{i \notin J} B) = 
(\bigast \hspace{-3pt} \strut_B^{i \in J} A_i) \bigast\hspace{-3pt} \strut_B B = 
(\bigast \hspace{-3pt} \strut_B^{i \in J} A_i). 
\]
The second claim then follows, since the $*$-rep\-re\-sent\-ations of $\bigast_B^{i \in J} A_i$ and $\bigast_B C_i$ coincide.
\end{proof}

The free product construction can be readily formulated when $A_i$ are possibly non-selfadjoint.
They have been studied first by Duncan \cite{Dun08}.
Their theory was later established in \cite[Section 5.3]{DFK17}, and exhibited in an alternative way by Dor-On and Salomon \cite[Section 4]{DS17}. 
Dor-On and Salomon \cite[Proposition 4.3]{DS17} develop similar dilation techniques and establish Corollary \ref{C:incl} in that generality.

\section{Free products of ucp maps}

We first establish that there is always a common extension of unital completely positive maps on each $A_i$ to the amalgamated free product
when they restrict to a common $*$-rep\-re\-sent\-ation on $B$. 
As this does not require expectations, it is a natural extension of Boca's result.

\begin{thm}\label{T:boca}
Let $B$ be a unital C*-algebra.
Let $\{A_i\}_{i \in I}$ be a family of unital C*-algebras and let $\ep_i: B \to A_i$ be faithful unital imbeddings. 
Let $\Phi_i: A_i \to \B(H)$ be unital completely positive maps which restrict to a common $*$-rep\-re\-sent\-ation of $B$. 
Then there is a unital completely positive map $\Phi: \fp_B A_i \to \B(H)$ such that $\Phi|_{A_i} = \Phi_i$ for all $i \in I$.
\end{thm}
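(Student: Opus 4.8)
The plan is to dilate the maps $\Phi_i$ to $*$-representations on a common Hilbert space that still agree on $B$, invoke the universal property of $\fp_B A_i$, and compress back to $H$. Write $\rho := \Phi_i \ep_i$ for the common $*$-representation of $B$. I would first fix, for each $i \in I$, a (unital) Stinespring dilation $\si_i : A_i \to \B(H \oplus H_i)$ of $\Phi_i$, so that $\Phi_i = P_H \si_i(\cdot)|_H$. Since $\si_i \ep_i$ is a $*$-representation of $B$ whose compression to $H$ is the $*$-representation $\rho$, the remark preceding Lemma \ref{L:DFK} shows that $H$ reduces $\si_i\ep_i(B)$. Hence $\rho_i := P_{H_i} \si_i \ep_i$ is a $*$-representation of $B$ on $H_i$ and $\si_i \ep_i = \rho \oplus \rho_i$.

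Next I would apply Lemma \ref{L:DFK} to the family $\{\rho_i : B \to \B(H_i)\}_{i \in I}$. This produces a Hilbert space $K$ containing $\sum_{i \in I}^\oplus H_i$ together with $*$-representations $\pi_i : A_i \to \B(K \ominus H_i)$ for which $\rho_i \oplus \pi_i \ep_i$ is independent of $i$. Put $\tau_i := \si_i \oplus \pi_i$, a unital $*$-representation of $A_i$ on $(H \oplus H_i) \oplus (K \ominus H_i) = H \oplus K$. Then
\[
\tau_i \ep_i = (\rho \oplus \rho_i) \oplus \pi_i \ep_i = \rho \oplus (\rho_i \oplus \pi_i \ep_i)
\]
is independent of $i$, so the universal property yields a unital $*$-representation $\pi : \fp_B A_i \to \B(H \oplus K)$ with $\pi|_{A_i} = \tau_i$. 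Finally set $\Phi := P_H \pi(\cdot)|_H$. This is unital and completely positive, and for $a \in A_i$ the summand $\pi_i(a)$ lives on a subspace orthogonal to $H$, so $\Phi(a) = P_H \si_i(a)|_H = \Phi_i(a)$.

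All the real content sits in Lemma \ref{L:DFK}, which is already available; the one point to keep in mind is that an individual Stinespring dilation of $\Phi_i$ splits off $\rho$ only inside $\si_i\ep_i$, not as a direct summand of a representation of $A_i$, which is precisely why one cannot simply form $\sum^\oplus \pi_i$ and must instead use Lemma \ref{L:DFK} to reconcile the various tails $\rho_i$ on $B$. The remaining verifications --- that a compression of a $*$-representation is unital completely positive, and that the block decompositions behave as claimed --- are routine.
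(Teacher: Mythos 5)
Your proposal is correct and follows the paper's own argument essentially verbatim: Stinespring dilate each $\Phi_i$, use the reducibility remark to split $\si_i\ep_i = \rho \oplus \rho_i$, apply Lemma \ref{L:DFK} to the tails $\rho_i$, form $\tau_i = \si_i \oplus \pi_i$, invoke the universal property, and compress to $H$. Nothing is missing.
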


\begin{proof}
By hypothesis $\rho_0 = \Phi_i\ep_i$ is a $*$-rep\-re\-sent\-ation of $B$ which is independent of $i\in I$.
Use Stinespring's Theorem to dilate each $\Phi_i$ to a $*$-rep\-re\-sent\-ation $\si_i$ of $A_i$ on $H \oplus H_i$ such that $\Phi_i(a) = P_H \si_i(a)|_H$ for $a \in A_i$.
Since $\Phi_i\ep_i = \rho_0$ is a $*$-rep\-re\-sent\-ation, it follows that $\si_i \ep_i = \rho_0 \oplus \rho_i$ is a direct sum of $*$-rep\-re\-sent\-ations of $B$ on $H$ and $H_i$.
Now we apply Lemma \ref{L:DFK} to the family of $*$-representations $\rho_i : B \to \B(H_i)$ for $i\in I$, and obtain $*$-rep\-re\-sent\-ations $\pi_i: A_i\to \B(K \ominus H_i)$ such that
\[
\rho_i \oplus \pi_i \ep_i = \rho_j \oplus \pi_j \ep_j.
\]
Then the  $*$-rep\-re\-sent\-ations
\[
\tau_i := \si_i\oplus \pi_i : A_i \to \B(H \oplus K)
\]
agree on $B$, i.e. $\tau_i \ep_i = \tau_j \ep_j$.
By the universal property of free products, there is a $*$-rep\-re\-sent\-ation
\[
\tau : \fp_B A_i \to \B(H \oplus K) \text{ with } \tau|_{A_i} = \tau_i .
\]
Then $\Phi = P_H \tau |_H$ is the required completely positive map.
\end{proof}

We can extend Theorem \ref{T:boca} to the case where the $\Phi_i$ agree just as linear maps on $B$.

\begin{thm}\label{T:boca cp}
Let $B$ be a unital C*-algebra.
Let $\{A_i\}_{i \in I}$ be a family of unital C*-algebras and let $\ep_i: B \to A_i$ be faithful unital imbeddings. 
Let $\Phi_i: A_i \to \B(H)$ be unital completely positive maps which restrict to a common linear map of $B$. 
Then there is a unital completely positive map $\Phi: \fp_B A_i \to \B(H)$ such that $\Phi|_{A_i} = \Phi_i$ for all $i \in I$.
\end{thm}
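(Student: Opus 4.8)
The plan is to reduce to Theorem \ref{T:boca} by first dilating the common linear map on $B$ to a $*$-representation, and then dilating each $\Phi_i$ in a way that is compatible with this chosen dilation. First note that $\phi := \Phi_i\ep_i$ is a common unital completely positive map of $B$ into $\B(H)$ (being the restriction of a ucp map), although it need not be multiplicative. Fix a minimal Stinespring dilation $\rho : B \to \B(\widetilde H)$ of $\phi$, so that $H \subseteq \widetilde H$, $\phi(b) = P_H \rho(b)|_H$ for $b \in B$, and $\widetilde H = \ol{\rho(B)H}$.

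Next, for each $i \in I$ dilate $\Phi_i$ by Stinespring's theorem to a $*$-representation $\pi_i : A_i \to \B(L_i)$ with $H \subseteq L_i$ and $\Phi_i(a) = P_H \pi_i(a)|_H$ for $a \in A_i$. Then $\pi_i|_B$ is a $*$-representation of $B$ on $L_i$ that dilates $\phi$, and $M_i := \ol{\pi_i(B)H} \subseteq L_i$ is a reducing subspace for $\pi_i(B)$ containing $H$ on which $\pi_i|_B$ restricts to a minimal Stinespring dilation of $\phi$. By the uniqueness of minimal Stinespring dilations, there is a unitary $V_i : \widetilde H \to M_i$ with $V_i|_H = \id_H$ and $V_i \rho(b) V_i^* = \pi_i(b)|_{M_i}$ for $b \in B$. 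Define $\Psi_i : A_i \to \B(\widetilde H)$ by $\Psi_i(a) = V_i^* P_{M_i} \pi_i(a)|_{M_i} V_i$. This is a unital completely positive map; $\Psi_i|_B = \rho$ for every $i$, because $M_i$ reduces $\pi_i(B)$; and $P_H \Psi_i|_H = \Phi_i$, because $H \subseteq M_i \subseteq L_i$ and $V_i$ fixes $H$ pointwise.

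Finally, apply Theorem \ref{T:boca} to the family $\{ \Psi_i : A_i \to \B(\widetilde H) \}_{i \in I}$, which restricts to the common $*$-representation $\rho$ of $B$. This yields a unital completely positive map $\Psi : \fp_B A_i \to \B(\widetilde H)$ with $\Psi|_{A_i} = \Psi_i$. Then $\Phi := P_H \Psi|_H$ is a unital completely positive map of $\fp_B A_i$ into $\B(H)$, and $\Phi|_{A_i} = P_H \Psi_i|_H = \Phi_i$ for all $i \in I$, as required.

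The only delicate point is the alignment step: to invoke Theorem \ref{T:boca} one needs the compressed dilations $\Psi_i$ to restrict to \emph{literally the same} $*$-representation of $B$ on a common Hilbert space. This is exactly what the uniqueness (up to a unitary fixing $H$) of the minimal Stinespring dilation of $\phi$ supplies; without first passing to a fixed minimal dilation $\rho$, the representations $\pi_i|_B$ need not be comparable. Everything else is routine bookkeeping with Stinespring dilations and compressions.
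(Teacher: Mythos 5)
Your proof is correct and follows essentially the same route as the paper: compress each Stinespring dilation of $\Phi_i$ to the minimal reducing subspace $\ol{\pi_i(B)H}$ and use uniqueness of the minimal Stinespring dilation of the common ucp map on $B$ to conjugate everything onto one reference space, then invoke Theorem \ref{T:boca}. The only cosmetic difference is that you align to an abstract minimal dilation $\rho$ on $\widetilde H$, whereas the paper fixes one index $i_0$ and uses $M_{i_0}$ as the reference space.
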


\begin{proof}
By hypothesis  $\Phi_0 = \Phi_i\ep_i$ is a unital completely positive map of $B$ into $\B(H)$ which is independent of $i\in I$.
Let $\si_i : A_i \to \B(H \oplus H_i)$ be a Stinespring dilation of each $\Phi_i$ and set $M_i = \ol{\si_i \ep_i(B) H}$. 
This is the minimal reducing subspace for $\si_i\ep_i(B)$, and thus determines a minimal Stinespring dilation $\rho_i$ of $\Phi_0$, namely
\[
\rho_i := P_{M_i} \si_i\ep_i : B \to \B(M_i).
\]
Fix some $i_0\in I$.
By uniqueness of the minimal dilation, there are unitary operators $U_i: M_{i_0} \to M_i$ such that
\[
U_i|_H = I_H \qand U_i^* \rho_i(b) U_i = \rho_{i_0}(b) \foral b\in B.
\]
Define unital completely positive maps of $A_i$ into $\B(M_{i_0})$ by
\[
\Psi_i(a) = U_i^* P_{M_i} \si_i(a) U_i \foral a \in A_i.
\]
Notice that they restrict to the common $*$-rep\-re\-sent\-ation $\rho_{i_0}$ on $B$. 
Thus by Theorem \ref{T:boca}, there is a unital completely positive map $\Psi : \fp_B A_i \to \B(M_{i_0})$ such that $\Psi|_{A_i} = \Psi_i$.
For $a \in A_i$ we have
\[
P_H U_i^* P_{M_i} \si_i(a) U_i|_H
=
P_H \si_i(a)|_H = \Phi_i(a) .
\]
Therefore the compression $\Phi = P_H \Psi|_H$ is the required map.
\end{proof}

We can modify the construction of Theorem \ref{T:boca} to prove Boca's result.
We use the following lemma.

\begin{lem} \label{L:ker}
Begin with the same setup and notation as in Lemma~$\ref{L:DFK}$ and its proof.
Furthermore assume that there is a subset $J \subset I$ for which there are conditional expectations $E_j$ of $A_j$ onto $B$ for $j\in J$.
Then there are $*$-rep\-re\-sent\-ations $\pi_i : A_i \to \B(K \ominus H_i)$ such that 
\[
\rho_i \oplus \pi_i \ep_i = \rho_j \oplus \pi_j \ep_j \foral i, j \in I
\]
with the additional property that $\pi_j(a) H_w \subset H_{wj}$ whenever $a \in \ker E_j$ for $j\in J$ and $s(w)\ne j$.
\end{lem}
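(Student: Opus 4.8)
The plan is to re-run the recursive construction in the proof of Lemma~\ref{L:DFK}, altering only the steps in which the letter being appended to a word lies in $J$. Recall that at each stage of that recursion one is handed a $*$-representation $\rho_u:B\to\B(H_u)$ and, for every $i\ne s(u)$, must produce a new Hilbert space $H_{ui}$, a $*$-representation $\rho_{ui}:B\to\B(H_{ui})$, and a $*$-representation $\pi_{i,u}:A_i\to\B(H_u\oplus H_{ui})$ with $\pi_{i,u}\ep_i=\rho_u\oplus\rho_{ui}$. For $i\notin J$ I keep the Arveson--Stinespring step unchanged. For $i=j\in J$ I replace it by the induced-representation construction attached to the conditional expectation $E_j$.

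Concretely, view $A_j$ as a C*-correspondence over $B$: the right action is through $\ep_j$, the $B$-valued inner product is $\langle a,a'\rangle_B:=E_j(a^*a')$, and the left action is left multiplication (which is adjointable since $\langle ca,a'\rangle_B=\langle a,c^*a'\rangle_B$). Form the interior tensor product $A_j\otimes_{\rho_u}H_u$; it is a Hilbert space carrying the $*$-representation $\pi_{j,u}(c):a\otimes\xi\mapsto ca\otimes\xi$. Because $E_j$ is a conditional expectation we have $A_j=\ep_j(B)\oplus\ker E_j$ as right Hilbert $B$-modules, and the sum is orthogonal for $\langle\cdot,\cdot\rangle_B$ (indeed $E_j(\ep_j(b)^*k)=b^*E_j(k)=0$ for $k\in\ker E_j$). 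Hence $A_j\otimes_{\rho_u}H_u=\big(\ep_j(B)\otimes_{\rho_u}H_u\big)\oplus\big(\ker E_j\otimes_{\rho_u}H_u\big)$, and $\ep_j(b)\otimes\xi\mapsto\rho_u(b)\xi$ identifies the first summand unitarily with $H_u$, intertwining the left $B$-action with $\rho_u$ (using $E_j\ep_j=\id_B$ and, for surjectivity, that $\rho_u$ is unital). Put $H_{uj}:=\ker E_j\otimes_{\rho_u}H_u$ and let $\rho_{uj}$ be the left $B$-action on it; then $\pi_{j,u}\ep_j=\rho_u\oplus\rho_{uj}$ on $H_u\oplus H_{uj}$, which is exactly the data the recursion requires. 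The extra property falls out at once: for $a\in\ker E_j$ and $\xi\in H_u$ (identified with $1\otimes\xi$) one has $\pi_{j,u}(a)(1\otimes\xi)=a\otimes\xi\in\ker E_j\otimes_{\rho_u}H_u=H_{uj}$, so $\pi_{j,u}(a)H_u\subseteq H_{uj}$.

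With this substitution the recursion closes verbatim, and forming $K=\sum_{w\in S}^\oplus H_w$ and $\pi_i=\sum_{s(w)\ne i}^\oplus\pi_{i,w}$ exactly as in Lemma~\ref{L:DFK} yields $\rho_i\oplus\pi_i\ep_i=\sum_{u\in S}^\oplus\rho_u$, independent of $i$. Finally, for $j\in J$, $a\in\ker E_j$, and $w$ with $s(w)\ne j$, the $w$-block of $\pi_j$ is precisely $\pi_{j,w}$, so $\pi_j(a)H_w=\pi_{j,w}(a)H_w\subseteq H_{wj}$, which is the asserted additional conclusion.

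The step I expect to need the most care is the identification of $A_j\otimes_{\rho_u}H_u$ with $H_u\oplus H_{uj}$ compatibly with the $B$-actions; everything else is a transcription of the proof of Lemma~\ref{L:DFK}. That identification relies on the bimodule identities $E_j(\ep_j(b)x)=bE_j(x)$ and $E_j(x\ep_j(b))=E_j(x)b$ for the conditional expectation, together with the orthogonal splitting $A_j=\ep_j(B)\oplus\ker E_j$ of right Hilbert $B$-modules. (If one does not wish to assume the $\rho_u$ unital, the argument still works after replacing $H_u$ by $\rho_u(1)H_u$ and padding with a zero representation of $A_j$; in the intended applications the $\rho_u$ are unital, so this is unnecessary.)
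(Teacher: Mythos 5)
Your proof is correct and follows the paper's strategy: the only modification to Lemma~\ref{L:DFK} occurs at the steps where a letter $j \in J$ is appended to a word $w$, and there the generic Arveson--Stinespring step is replaced by a dilation built from $E_j$ whose compression to $H_w$ annihilates $\ker E_j$, forcing $\pi_{j,w}(\ker E_j)H_w \subseteq H_{wj}$. The only difference from the paper is presentational: the paper applies Stinespring's theorem abstractly to the unital completely positive map $\rho_w E_j : A_j \to \B(H_w)$ and reads off the vanishing $(1,1)$ corner of $\pi_{j,w}(a)$ for $a \in \ker E_j$, whereas you realize that same (minimal) dilation concretely as the interior tensor product $A_j \otimes_{\rho_w} H_w$ over the correspondence structure induced by $E_j$ --- an equivalent construction, modulo the routine remark that the semi-inner product $E_j(a^*a')$ may be degenerate, so one should pass to the quotient completion (which is harmless, since the null space lies inside $\ker E_j$ and the orthogonal splitting $\ep_j(B) \oplus \ker E_j$ survives).
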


\begin{proof}
The only change to the proof of Lemma~\ref{L:DFK} is in the construction of the $*$-rep\-re\-sent\-ations $\pi_{j,w}$ for $j\in J$ and $s(w)\ne j$. 
For these $(j, w)$ we will use the completely positive map $\rho_w E_j$ from $A_j$ into $\B(H_w)$.
Then use Stinespring's Dilation Theorem to obtain the $*$-rep\-re\-sent\-ation $\pi_{j,w}$ of $A_j$ into $\B(H_w\oplus H_{wj})$ such that $\rho_w E_j = P_{H_w} \pi_{j,w} |_{H_w}$.
Since $\ker E_j$ is in the kernel of this completely positive map, we obtain the matrix form with respect to the decomposition $H_w\oplus H_{wj}$
\[ 
\pi_j(a)|_{H_w \oplus H_{wj}} = \pi_{j, w}(a) = \begin{bmatrix} 0 & *\\ \ast&*\end{bmatrix} 
\]
for $a \in \ker E_j$. This means that $\pi_j(a) H_w \subset H_{wj}$.
\end{proof}

We are now able to give the dilation theoretic proof that generalizes \cite[Theorem 3.1]{Boc91}.
This provides an explicit Stinespring dilation, which Boca accomplished in his setting in \cite{Boc93}.

\begin{thm}\label{T:boca2}
Let $\{A_i\}_{i \in I}$ be unital C*-algebras containing a common unital C*-subalgebra $B$,
and suppose that there are conditional expectations $E_j$ of $A_j$ onto $B$ for $j \in J \subset I$.
Let $\Phi_i: A_i\to\B(H)$ be unital completely positive maps which restrict to a common $*$-rep\-re\-sent\-ation of $B$. 
Then there is a unital completely positive map $\Phi: \fp_B A_i \to\B(H)$ such that $\Phi|_{A_i}=\Phi_i$ for all $i\in I$, and 
\begin{equation}\label{eq:prod}
\Phi(a_n\cdots a_1) = \Phi_{j_n} (a_n) \cdots \Phi_{j_1} (a_1) 
\end{equation}
when all $j_k\in J$, $a_k\in \ker E_{j_k} \subset A_{j_k}$, and $j_1 \neq \cdots \neq j_n$.
\end{thm}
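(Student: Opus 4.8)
The plan is to follow the dilation strategy of Theorem~\ref{T:boca}, but feed it the refined construction of Lemma~\ref{L:ker} instead of Lemma~\ref{L:DFK}. As in Theorem~\ref{T:boca}, write $\rho_0 = \Phi_i\ep_i$ for the common $*$-representation of $B$, dilate each $\Phi_i$ via Stinespring to a $*$-representation $\si_i$ of $A_i$ on $H \oplus H_i$ with $\Phi_i(a) = P_H \si_i(a)|_H$, and observe that $\si_i\ep_i = \rho_0 \oplus \rho_i$ splits as $*$-representations on $H$ and $H_i$. The key departure is that we now apply Lemma~\ref{L:ker} to the family $\rho_i : B \to \B(H_i)$, using the given subset $J$ and the expectations $E_j$. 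This produces $*$-representations $\pi_i : A_i \to \B(K \ominus H_i)$ agreeing on $B$, hence $\tau_i := \si_i \oplus \pi_i$ agree on $B$ and assemble via the universal property into $\tau : \fp_B A_i \to \B(H \oplus K)$ with $\tau|_{A_i} = \tau_i$; we set $\Phi = P_H \tau|_H$. This immediately gives $\Phi|_{A_i} = \Phi_i$ and that $\Phi$ is unital completely positive, so it remains only to verify the product formula \eqref{eq:prod}.

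For \eqref{eq:prod}, fix $j_1 \neq \cdots \neq j_n$ in $J$ and $a_k \in \ker E_{j_k}$. The point is that $\Phi(a_n \cdots a_1) = P_H \tau(a_n)\cdots\tau(a_1)|_H = P_H \tau_{j_n}(a_n) \cdots \tau_{j_1}(a_1)|_H$, where I write $\tau(a_k)$ for the action of $\tau_{j_k}(a_k)$ on the appropriate summand. Recall $H_\mt := H$ plays the role of the length-zero word in the notation of Lemma~\ref{L:ker}, so $H_w$ ranges over $\{H\} \cup \{H_w : w \in S\}$; the kernel condition from Lemma~\ref{L:ker} reads $\pi_j(a) H_w \subset H_{wj}$ for $a \in \ker E_j$, and with $w = \mt$ this is $\si_j(a) H \subset H_j$ (which also follows directly from $P_H \si_j(a)|_H = \Phi_j(a) E_j$-style reasoning, or simply from the block form once $\rho_0$ is handled as the trivial expectation). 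Starting with a vector $h \in H = H_\mt$, applying $\tau_{j_1}(a_1)$ sends it into $H_{j_1}$, then $\tau_{j_2}(a_2)$ sends that into $H_{j_2 j_1}$, and inductively $\tau_{j_k}(a_k) \cdots \tau_{j_1}(a_1) h \in H_{j_k \cdots j_1}$; the hypothesis $j_1 \neq \cdots \neq j_n$ guarantees each word $j_k \cdots j_1$ lies in $S$ so the next step is legitimate. Hence after all $n$ steps the vector lives in $H_{j_n \cdots j_1}$, which is orthogonal to $H$, so naively $P_H$ of the product vanishes --- but this is \emph{not} what we want.

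The fix is to compute $\Phi(a_n\cdots a_1)$ by inserting the Stinespring decompositions more carefully: each $\tau_{j_k}(a_k)$ acts on $H \oplus K$, and the relevant compression is telescoped by repeatedly using $\si_{j_k}(a_k) = \Phi_{j_k}(a_k) \oplus (\text{off-diagonal into } H_{j_k})$ together with the Lemma~\ref{L:ker} block structure of the $\pi_{j_k,w}$. Concretely, write $P_0 = P_H$ and let $P_w$ be the projection onto $H_w$; then $P_0 \tau_{j_n}(a_n)\cdots\tau_{j_1}(a_1) P_0 = \sum P_0 \tau_{j_n}(a_n) P_{w_{n-1}} \tau_{j_{n-1}}(a_{n-1}) \cdots P_{w_1}\tau_{j_1}(a_1) P_0$ over intermediate words $w_k$, and the kernel condition forces $w_k = j_k\cdots j_1$ at every stage while the \emph{first} map must leave $H_\mt$ and the \emph{last} must return to it --- a contradiction unless one tracks the genuine reason \eqref{eq:ce}-type formulas hold, namely that $\Phi$ is built to be \emph{multiplicative across the block decomposition} in the sense that the $(H,H_{j_n\cdots j_1})$-corner of $\si_{j_n}(a_n)$ composed down is exactly $\Phi_{j_n}(a_n)\cdots\Phi_{j_1}(a_1)$ mapped into that corner, and then the adjoint corner of a further factor brings it back. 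The cleanest route, and the one I would actually write, is to reorganize: prove by induction on $n$ that $P_{H_{j_n\cdots j_1}}\, \tau_{j_n}(a_n)\cdots\tau_{j_1}(a_1)\, h = V_{j_n\cdots j_1}\big(\Phi_{j_n}(a_n)\cdots\Phi_{j_1}(a_1) h\big)$ for a fixed isometry-valued family, where $V_{j_n\cdots j_1} : H \to H_{j_n\cdots j_1}$ comes from the off-diagonal parts; the inductive step uses $\si_{j_k}(a_k)|_H = \Phi_{j_k}(a_k) \oplus V_{j_k}\Phi_{j_k}(a_k)$-type identities coming from Stinespring plus $a_k \in \ker E_{j_k}$, and Lemma~\ref{L:ker}'s containment $\pi_{j_k}(a_k) H_{w} \subset H_{wj_k}$ with the matching corner equal to $P_{H_{w j_k}}\pi_{j_k,w}(a_k)|_{H_w}$ computed against $\rho_w E_{j_k}$. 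Then \eqref{eq:prod} follows by applying one more projection --- or rather, one reads \eqref{eq:prod} off by noting $\Phi(a_n \cdots a_1) h = P_H \tau(a_n\cdots a_1) h$ and observing the only surviving term in the block expansion, after using the kernel conditions, is the one computing $\Phi_{j_n}(a_n)\cdots\Phi_{j_1}(a_1)h$ in $H$ via the "staircase" whose steps go $H \to H_{j_1} \to H_{j_2 j_1} \to \cdots$, with the crucial observation being that the \emph{compression to $H$} picks out precisely the composite of the $(H_w, H_{wj})$-to-$(H_w)$ reverse corners, which telescope to the claimed product. The main obstacle is exactly this bookkeeping: making the staircase argument rigorous requires being careful about which corner of $\si_{j_k}(a_k)$ (respectively $\pi_{j_k,w}(a_k)$) one uses at each of the $n$ stages and checking the corners compose correctly, i.e.\ that $P_{H_w}\pi_{j,w}(a)|_{H_w} = 0$ (from $\ker E_j$) while the cross-corner $P_{H_{wj}}\pi_{j,w}(a)|_{H_w}$ is the "generalized creation operator" whose adjoint in a later factor returns the vector to lower level --- once this is set up the computation is a routine induction, and I would present it as such rather than belabor the indices.
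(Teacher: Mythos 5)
Your construction of $\Phi$ is exactly the paper's: Stinespring dilations $\si_i$ of $\Phi_i$ on $H\oplus H_i$, Lemma~\ref{L:ker} applied to the resulting $\rho_i$, the representations $\tau_i=\si_i\oplus\pi_i$, the universal property, and compression to $H$. The gap is in the verification of \eqref{eq:prod}, which is the entire content of the theorem beyond Theorem~\ref{T:boca}. Your ``staircase'' rests on the claim that $\tau_{j_1}(a_1)h\in H_{j_1}$, i.e.\ that $\si_j(a)H\subset H_j$ for $a\in\ker E_j$. This is false: $\si_j$ is the Stinespring dilation of the given map $\Phi_j$, not of $\rho_0E_j$, so $P_H\si_j(a)|_H=\Phi_j(a)$, which has no reason to vanish on $\ker E_j$. (If you forced $\si_j(\ker E_j)H\subset H_j$ you would be dilating $\rho_0E_j$ and would end up with $\Phi|_{A_j}=\rho_0E_j$ rather than $\Phi_j$.) The kernel condition of Lemma~\ref{L:ker} applies only to the blocks $\pi_{j,w}$, which are built from $\rho_wE_j$; the space $H$ is not one of the word spaces $H_w$ and is not subject to it. Consequently the vector $\tau_{j_n}(a_n)\cdots\tau_{j_1}(a_1)h$ does not march off into $H_{j_n\cdots j_1}$, and your subsequent attempt to recover \eqref{eq:prod} from ``reverse corners'' of the staircase identifies the wrong surviving term: a component that leaves $H$ never returns to it, because junk created at stage $k$ lies in word spaces ending in $j_k$, hence misses $H_{j_{k+1}}$ and is acted on by $\pi_{j_{k+1}}$, whose range lies in $K$, orthogonal to $H$. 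So the staircase path contributes $0$ to $P_H(\cdots)|_H$, not the product, and no telescoping of cross-corners produces $\Phi_{j_n}(a_n)\cdots\Phi_{j_1}(a_1)$.

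The argument the paper gives is an induction with the opposite invariant: for $h\in H$,
\begin{equation*}
\tau_{j_n}(a_n)\cdots\tau_{j_1}(a_1)h\ \in\ \Phi_{j_n}(a_n)\cdots\Phi_{j_1}(a_1)h\ +\ \sum_{s(w)=j_n}\!\!{}^{\oplus}\, H_w .
\end{equation*}
That is, the $H$-component of the running vector is at every stage \emph{exactly} the partial product (because the $(H,H)$ corner of $\si_{j_k}(a_k)$ is $\Phi_{j_k}(a_k)$), while the error terms accumulate in word spaces ending in the most recently applied index; the kernel condition of Lemma~\ref{L:ker} is used only to guarantee that these errors stay in such word spaces and so never re-enter $H_{j_{k+1}}$, hence never re-enter $H$. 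Compression to $H$ then gives \eqref{eq:prod} immediately. Your write-up never isolates this invariant, and the bookkeeping you flag as ``the main obstacle'' is precisely where the proposed proof fails.
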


\begin{proof}
The construction is identical to the proof of Theorem~\ref{T:boca} except that we use the refinement in Lemma~\ref{L:ker}. 
Therefore we now have $*$-rep\-re\-sent\-ations
\[
\tau_i := \si_i \oplus \pi_i : A_i \to \B \big(H \oplus \sumoplus_{w \in S} H_w \big)
\]
such that for $h\in H$ and $a\in A_i$,
\[
\tau_i(a)h \in \Phi_i(a)h + H_i \foral i \in I,
\]
and in addition
\[
\pi_j(\ker E_j) H_w \subset H_{wj} \text{ when } j \in J \text{ and } s(w) \neq j.
\]
It remains only to verify the identity (\ref{eq:prod}). 
By construction we have 
\[ 
 \Phi(a_n\cdots a_1) = P_{H} \tau_{j_n}(a_n) \cdots \tau_{j_1}(a_1) |_H .
\]
We will show by induction that for $h\in H$,
\[
 \tau_{j_n}(a_n) \cdots \tau_{j_1}(a_1) h
 \in \Phi_{j_n} (a_n) \cdots \Phi_{j_1} (a_1) h + \sumoplus_{s(w)= j_n} H_w
\]
when all $j_k\in J$, $a_k\in \ker E_{j_k} \subset A_{j_k}$, and $j_1 \neq \cdots \neq j_n$.
This holds for $n=1$. 
Assuming the result for $n-1$, we see that
\begin{align*}
 \tau_{j_{n}}(a_{n}) \cdots \tau_{j_1}(a_1) h & \in 
 \tau_{j_{n}}(a_{n}) \Big(\Phi_{j_{n-1}} (a_{n-1}) \cdots \Phi_{j_1} (a_1) h + \sumoplus_{s(w)= j_{n-1}} H_w \Big)\\
 &\subseteq \Phi_{j_{n}} (a_{n}) \cdots \Phi_{j_1} (a_1) h + H_{j_{n}} + \sumoplus_{s(w)= j_{n-1}} H_{wj_{n}} \\
 &\subseteq \Phi_{j_{n}} (a_{n}) \cdots \Phi_{j_1} (a_1) h + \sumoplus_{s(w)= j_{n}} H_{w} .
\end{align*}
By induction, this holds for all $n$.
Compression to $H$ yields that
\[ 
\Phi(a_n\cdots a_1) = \Phi_{j_n} (a_n) \cdots \Phi_{j_1} (a_1) .\qedhere
\]
\end{proof}

\bigskip

\begin{acknow}
The authors acknowledge support from the London Ma\-thematical Society.
Kenneth Davidson visited Newcastle University under a Scheme 2 LMS grant (Ref: 21602).
The authors would like to thank Steven Power (Lancaster University) and Ivan Todorov (Queen's University of Belfast) for supporting this UK visit.

We thank Chris Ramsey for asking a question which led to the inclusion of Theorem~\ref{T:boca cp}.

We also thank the referee for a meticulous reading of our paper.
\end{acknow}


\end{document}